\numberwithin{figure}{section}
\theoremstyle{plain}
\newtheorem{definition}{Definition}[section]
\newtheorem{theorem}[definition]{Theorem}
\newtheorem{lemma}[definition]{Lemma}
\theoremstyle{definition}
\newtheorem{remark}[definition]{Remark}
\newtheorem{example}[definition]{Example}
\newcommand{\Ch}{{\mathit{Ch}}}
\renewcommand{\S}{{\mathcal{S}}}
\title[Connectivity of chamber graphs]{Connectivity of chamber graphs \\ of buildings
and related complexes}
\author{Anders Bj\"orner}
\address{Royal Institute of Technology, Department of Mathematics,
  S-100 44 Stockholm, Sweden}
\email{bjorner@math.kth.se}
\author{Kathrin Vorwerk}
\address{Royal Institute of Technology, Department of Mathematics,
  S-100 44 Stockholm, Sweden}
\email{vorwerk@math.kth.se}
\thanks{Research supported by the Knut and Alice Wallenberg Foundation,
grant KAW.2005.0098}
\subjclass[2000]{05E15; 05C40; 51E24}
\begin{document}

\begin{abstract} 
Let $\Delta$ be a finite building (or, more generally, a thick spherical and locally finite building).
The chamber graph $G(\Delta)$, whose edges are the pairs of adjacent chambers in $\Delta$,
is known to be $q$-regular for a certain number $q=q(\Delta)$. Our main result is that
$G(\Delta)$ is $q$-connected in the sense of graph theory.

Similar results are proved for the chamber graphs of 
Coxeter complexes and for order complexes of geometric lattices.

\end{abstract}

\maketitle

\section{Introduction}

Buildings were introduced by Tits \cite{Tits1974} for the purpose of creating a unified class
of geometric objects upon which  groups of Lie type act, and from which such
groups arise  as automorphism groups. This highly successful project has led to
a rich and elaborate theory, interweaving
group theory, geometry and combinatorics, see
\cite{AbramenkoBrown2008}, 
\cite{Ronan1989},
\cite{Scharlau1995},
\cite{Tits1974}.

From a purely combinatorial point of view, buildings can be defined and characterized in two ways.
First, they are  highly symmetric {\em simplicial complexes}. 
They arise by gluing together Coxeter complexes in a very symmetric way and can be interpreted as $q$-analogues of Coxeter complexes. The maximal simplices are called {\em chambers} and the embedded
Coxeter complexes are called {\em apartments}. This is the original 
point of view of Tits \cite{Tits1974}. 

Second, focussing on the chambers and their adjacency relation as the primitive objects of the theory,
buildings  can be characterized as a class of  {\em chamber systems}. This means that
one looks at the structure of the chamber graph, whose edges are the pairs of adjacent chambers,
embellished by certain labelling of these edges.
The chamber system point of view was introduced by Tits in later work and is exposited
e.g. in \cite{Ronan1989}.

The axioms for the system of apartments indicate that a building is very tightly held together.
It is therefore reasonable to expect also a high degree of connectivity of its chamber graph,
as measured by the number of pairwise disjoint paths (or, galleries) that connect any pair of chambers.
If  at least $q$ pairwise disjoint paths 
connect any pair of chambers, then the chamber graph
is said to be {\em $q$-connected}. The maximal such $q$ is the degree of connectivity of the graph.

The main result of this paper is that the chamber graph $G(\Delta)$  of
 a finite (or, more generally, thick spherical and locally finite) building $\Delta$
  is $q(\Delta)$-connected in the sense of graph theory. Here $q(\Delta)$ denotes
  the number of chambers adjacent to any given chamber of $\Delta$. Since more than $q(\Delta)$ 
  independent paths cannot leave a chamber,   it follows that the
  result is sharp, meaning that $q(\Delta)$  is the exact degree of connectivity of the chamber graph.
   
 Coxeter complexes are closely related to buildings. They appear as apartments in
 buildings as well as in many other contexts. 
  We show for a large class of $(d-1)$-dimensional Coxeter complexes
  that their chamber graph is $d$-connected. This class includes, for example,
the complexes of the classical affine Coxeter groups. Our method is constructive and relies strongly on the group structure behind the Coxeter complexes.

The buildings and the Coxeter complexes of type A are, respectively, the order complexes of
 subspace lattices of finite-dimensional vector spaces over some field, and the Boolean
lattices of subsets of some  finite set. Both these types of lattices are examples of geometric lattices.
In the last section we extend the study to chamber graphs of order complexes for  general 
geometric lattices.
 Again, we can prove a lower bound on the connectivity of the graph of those complexes. However, in this case the graphs need not be regular anymore and the bound needs not be sharp.

\section{Preliminaries}

\subsection{Graph theory}

We establish the graph-theoretic notions that are relevant for this paper. We follow the notation of \cite{Diestel2005} and refer to it for further details. In what follows, $G$ denotes a simple graph with vertex set $V(G)$ and edge set $E(G)$. The graph $G$ may be infinite, but we assume that the number of
edges incident to a vertex is always finite.

For $A,B \subseteq V(G)$, an {\em $A-B$ path} is a path starting at a vertex in $A$ and ending at a vertex in $B$ such that no interior vertex of the path is in $A \cup B$. If $A = \{a\}$ and $B = \{b\}$, then we call such a path  an $a-b$ path. Two $A-B$ paths are {\em disjoint} if the sets of their interior vertices are disjoint. The distance $d_G(u,v)$ between two vertices $u$ and $v$ of $G$ is the minimal length of an $u-v$ path.

A graph $G$ is called {\em $k$-connected} if $|V(G)| > k$ and $G$ remains connected after removing fewer than $k$ vertices and all incident edges. 
It is clear that in a $k$-connected graph every vertex is incident to at least $k$ edges. A well-known theorem by Menger \cite[Theorem 3.3.6]{Diestel2005}, valid also for infinite graphs, states that a graph is $k$-connected if and only if it contains $k$ disjoint $u-v$ paths for any two vertices $u,v \in V(G)$. This fact can be strengthened as follows.

\begin{lemma}[Liu's criterion, \cite{Liu1990}]
\label{liu}
Let $G$ be a connected graph and $|V(G)| > k$. If for any two
vertices $u$ and $v$ of $G$ with distance $d_G(u, v) = 2$ there are $k$ disjoint $u-v$
paths in $G$, then $G$ is $k$-connected.
\end{lemma}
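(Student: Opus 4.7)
The plan is to argue by contradiction using Menger's theorem. Assume that $G$ is connected with $|V(G)|>k$ and that every pair of vertices at distance $2$ is joined by $k$ internally disjoint paths, but that $G$ fails to be $k$-connected. By Menger's theorem (equivalently, by the definition of $k$-connectedness), there exists a separating set $S\subseteq V(G)$ with $|S|<k$; choose such an $S$ of minimum cardinality, and let $C_1,C_2$ be two distinct components of $G-S$.

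Next, I would exploit the minimality of $S$ to show that every $w\in S$ has a neighbor in $C_1$ and a neighbor in $C_2$. Indeed, if some $w\in S$ had no neighbor in, say, $C_1$, then removing $w$ from $S$ would not reconnect $C_1$ to the other components, and $S\setminus\{w\}$ would already be a separator, contradicting minimality. Fixing any $w\in S$, I can therefore pick $u\in C_1$ and $v\in C_2$ both adjacent to $w$. Since $u$ and $v$ lie in different components of $G-S$ they are not adjacent in $G$, so the path $u-w-v$ realizes the distance and $d_G(u,v)=2$.

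Now the hypothesis applies: there exist $k$ pairwise internally disjoint $u$--$v$ paths $P_1,\dots,P_k$ in $G$. Because $S$ separates $u$ from $v$, each $P_i$ must contain at least one interior vertex lying in $S$. By internal disjointness these witnesses are distinct, which forces $|S|\geq k$, contradicting $|S|<k$. This contradiction shows that $G$ is $k$-connected.

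I expect the only delicate point to be the step ensuring the existence of $u,v$ at distance exactly $2$ separated by $S$; this is precisely where the minimality of $S$ is essential. Once a distance-$2$ pair sitting across a small cut is secured, the hypothesis together with vertex-disjointness of the paths delivers the contradiction almost immediately.
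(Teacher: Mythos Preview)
Your proof is correct and follows essentially the same strategy as the paper's: both take a minimal separating set $S$ with $|S|<k$, use minimality to locate a pair of vertices at distance~$2$ lying in different components of $G-S$, and then obtain a contradiction because $k$ internally disjoint paths between them would each have to meet $S$. The only cosmetic difference is how the distance-$2$ pair is found---the paper takes a $u$--$v$ path meeting $S$ in a single vertex $v_i$ and uses $v_{i-1},v_{i+1}$, while you argue directly that any $w\in S$ has neighbors in both components.
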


For the proof, Liu in \cite{Liu1990} refers to another paper to which we have not had access, so we supply a proof.

\begin{proof}[Proof of Lemma \ref{liu}]
Assume that $G$ is not $k$-connected. By definition, there exists some $S \subset V(G)$ with $|S| < k$ such that $G - S$ is disconnected.
Choose $S$ minimal with respect to inclusion among all such sets. Then, there are two vertices $u$ and $v$ that lie in different components of $G - S$ and a $u-v$ path $P = (u = v_0, v_1, \ldots, v_n = v)$ that contains exactly one element of $S$. If $P \cap S = \{ v_i \}$ then $v_{i-1}$ and $v_{i+1}$ are also in different components of $G - S$ and thus there are at most $|S| \leq k-1$ disjoint $v_{i-1} - v_{i+1}$ paths in $G$. But $d_G(v_{i-1},v_{i+1}) = 2$, and we are done.
\end{proof}

A graph $G$ is called {\em $k$-regular} if every vertex $v \in V$ is contained in exactly $k$ edges. A $k$-regular graph is obviously at most $k$-connected.

\subsection{Chamber graphs of simplicial complexes}\label{ch.graphs}

Let $\Delta$ be a pure $d$-dimensional simplicial complex. The $d$-dimensional faces of $\Delta$ are called {\em chambers} and the $(d-1)$-dimensional faces are called {\em walls}. The set of all chambers is denoted by
 $\Ch(\Delta)$. Two chambers are called {\em adjacent} if they contain a common wall. The {\em chamber graph} of $\Delta$ is the graph $G(\Delta)$ with vertex set $\Ch(\Delta)$ where two chambers of $\Delta$ are connected by an edge if they are adjacent. Paths in $G(\Delta)$ are 
 sometimes called {\em galleries} in $\Delta$. In the literature, chamber graphs have also been called dual graphs. This is to distinguish from the graph of a simplicial complex, by which is 
 usually meant its $1$-skeleton.

A pure $d$-dimensional simplicial complex $\Delta$ is said to be {\em balanced} if one can color the vertices of $\Delta$ with the colors $1,2,\ldots,d+1$ so that in every chamber of $\Delta$ the $d+1$ 
vertices are colored differently. Balanced simplicial complexes are a class of complexes with many interesting properties, see e.g. \cite[Ch. III.4]{Stanley1996}. Instead of coloring the vertices of a balanced simplicial complexes $\Delta$, we can think of coloring  the walls of $\Delta$  with $1,2,\ldots,d+1$ so that for every chamber of $\Delta$ all its walls are colored differently. Here, the color of a wall is
taken to be the one that none of its vertices is colored with. This induces an edge-coloring of the chamber graph of $\Delta$.

\begin{example}
In later sections we prove $k$-connectivity for the chamber graphs of certain balanced simplicial complexes 
whose chamber graphs  are $k$-regular.
%are regular and have maximal connectivity, that is their connectivity is equal to the degree of all its vertices. 
%The $1$-dimensional complex in 
There is no such relationship between regularity and connectivity  in general. 
Figure \ref{fig:Example_Regular_Connected} shows 
%The graph is 4-regular but not even 3-connected We can also consider the bipartite graph as a one-dimensional 
a balanced $1$-dimensional complex whose chamber graph is $6$-regular but not even $5$-connected,
since it is disconnected by removing the $4$ chambers connecting the walls $A$ and $B$
to the left substructure.
\begin{figure}[h]
\begin{center}
\psfrag{A}{$A$}
\psfrag{B}{$B$}
\includegraphics[scale=0.5]{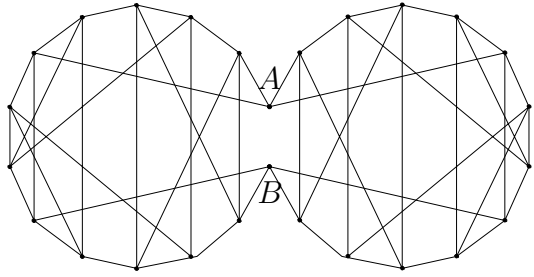}
\caption{Balanced simplicial complex with 6-regular chamber graph that is not 5-connected}
\label{fig:Example_Regular_Connected}
\end{center}
\end{figure}
\end{example}

\section{Chamber graphs of Coxeter complexes}

Let $(W,S)$ be a Coxeter group with a finite set of generators $S$.
The {\em Coxeter complex} $\Delta = \Delta(W,S)$ is by definition a simplicial complex on the vertex set $V = \cup_{s \in S} W / W_{(s)}$ of all left cosets of maximal standard parabolic subgroups $W_{(s)} = W_{S \setminus s}$ of $W$. Its  chambers are  all $C_w = \{ w W_{(s)} : s \in S \}$ for $w\in W$.

We recall some basic facts about $\Delta$, more details can be found in, for example,
% \cite[Ch. 3, Ex. 16]{BjoernerBrenti2005}. 
\cite{AbramenkoBrown2008}, 
\cite{Ronan1989},
\cite{Scharlau1995},
\cite{Tits1974}.
$\Delta(W,S)$ is an $(|S|-1)$-dimensional balanced simplicial complex. The disjoint union
$V = \cup_{s \in S} V_s$ partitions its vertex set into color classes 
$V_s = W / W_{(s)}$. The group $W$ acts simply transitively 
on the chambers of $\Delta$, which yields a bijection $w \mapsto C_w$ between $W$ and $\Ch(\Delta)$.

Two chambers $C_w$ and $C_{w'}$ are adjacent if and only if $w' = ws$ for some $s \in S$.
Thus, the chamber graph $G = G(\Delta)$ is isomorphic to the Cayley graph of
$W$ with respect to the generating set $S$. In particular, we can take $W$ as vertex set of $G$ and there is an edge between $w$ and $w'$ if and only if $w' = ws$ for some $s \in S$. It is clear that $G$ is $|S|$-regular.

\begin{definition}
A Coxeter group $(W,S)$ is said to be $2$-finite, 
if for each pair $s,t \in S$ the element $s t \in W$ is of finite order.
\end{definition}

Let $(W,S)$ be $2$-finite, and 
for each pair $s,t \in S$ denote by $P_{s,t}$ the path from $s$ to $t$ in $G$ given by
\[
    P_{s,t}: \quad s - st - sts - stst - \ldots - tst - ts - t
\]
The path exists due to the finite order of $s t$, its length is $2k-2$ if the order of $s t$ is $k$.
Observe that if $\{s,t\} \neq \{s', t'\}$, then the paths $P_{s,t}$ and $P_{s' ,t'}$ are disjoint, except
possibly at their endpoints.

\begin{theorem}
\label{connectivitycoxetercomplex}
Let $(W,S)$ be a $2$-finite Coxeter group, and let $\Delta = \Delta(W,S)$ be its Coxeter complex. Then the chamber graph $G = G(\Delta)$  is $|S|$-connected.
\end{theorem}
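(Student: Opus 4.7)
The plan is to apply Liu's criterion (Lemma~\ref{liu}) to reduce the goal to producing $|S|$ internally disjoint paths between any two vertices at distance~$2$ in $G$. Since $G$ is vertex-transitive under left multiplication by $W$, I may assume one vertex is the identity $e$ and the other is $v = s_0 t_0$ for some distinct $s_0, t_0 \in S$ with $s_0 t_0 \notin S$, so that $d_G(e, v) = 2$.

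Two of the paths live inside the dihedral subgroup $\langle s_0, t_0 \rangle$, whose restriction in $G$ is a single cycle through $e$ and $v$: let $P_{s_0}$ be the short arc $e - s_0 - v$ and let $P_{t_0}$ be the complementary long arc $e - t_0 - t_0 s_0 - \cdots - s_0 t_0 = v$. Their interior vertex sets $\{s_0\}$ and $\langle s_0, t_0 \rangle \setminus \{e, s_0, v\}$ are trivially disjoint.

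For each remaining $r \in S \setminus \{s_0, t_0\}$, I construct $P_r$ as the concatenation of (a)~the edge $e - r$; (b)~the path $P_{r, s_0}$ with its final vertex $s_0$ deleted, running from $r$ to $s_0 r$ through $\langle r, s_0 \rangle \setminus \{e, s_0\}$; and (c)~the left translate $s_0 \cdot P_{r, t_0}$, running from $s_0 r$ to $s_0 t_0 = v$ through $s_0(\langle r, t_0 \rangle \setminus \{e\})$. The $2$-finite hypothesis makes each $P_{r, s_0}$ and $P_{r, t_0}$ finite; left multiplication by $s_0$ is a graph automorphism of $G$, so (c) is a legitimate path; and pieces (b) and (c) glue correctly at their common endpoint $s_0 r$.

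The main obstacle is verifying that the $|S|$ paths $P_{s_0}, P_{t_0}, \{P_r\}_{r \in S \setminus \{s_0, t_0\}}$ are pairwise internally disjoint. The interior of each $P_r$ equals $(\langle r, s_0 \rangle \setminus \{e, s_0\}) \cup s_0(\langle r, t_0 \rangle \setminus \{e, r, t_0\})$, and every overlap check reduces to the standard identity $W_I \cap W_J = W_{I \cap J}$ for standard parabolic subgroups of $W$. For distinct $r, r' \in S \setminus \{s_0, t_0\}$ one uses $\langle r, s_0 \rangle \cap \langle r', s_0 \rangle = \{e, s_0\}$, $\langle r, t_0 \rangle \cap \langle r', t_0 \rangle = \{e, t_0\}$, and $\langle r, s_0 \rangle \cap \langle r', t_0 \rangle = \{e\}$; for $P_r \cap P_{t_0}$ one uses $\langle r, s_0 \rangle \cap \langle s_0, t_0 \rangle = \{e, s_0\}$ and $\langle r, t_0 \rangle \cap \langle s_0, t_0 \rangle = \{e, t_0\}$; and $P_r \cap P_{s_0}$ is handled by the fact that $P_r$ avoids $s_0$ by construction. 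In each case the forced common elements are excluded from the interiors by construction, completing the disjointness verification. Liu's criterion then yields $|S|$-connectivity.
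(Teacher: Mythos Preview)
Your proof is correct and essentially identical to the paper's: the paper normalizes the common neighbor (rather than an endpoint) to $e$, so the two distance-$2$ vertices become $s$ and $t$, and the $|S|$ paths are $s - e - t$, $P_{s,t}$, and $P_{s,s'}\circ P_{s',t}$ for each $s'\in S\setminus\{s,t\}$; left-translating your construction by $s_0$ recovers exactly these paths. Your explicit disjointness verification via $W_I\cap W_J = W_{I\cap J}$ is a bit more detailed than the paper's one-line appeal to the observation that $P_{s,t}$ and $P_{s',t'}$ meet only at shared endpoints when $\{s,t\}\neq\{s',t'\}$.
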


\begin{proof}%[Proof of Theorem \ref{connectivitycoxetercomplex}]
We use Liu's criterion (Lemma \ref{liu}). Clearly, $G$ has at least $|S|+1$ vertices. Let $w$ and $w'$ be vertices with $d_G(w,w') = 2$ and let $w - w'' - w'$ be a path in $G$. Without loss of generality we may 
assume that $w''$ is the identity element $e$, because the action of 
$W$ is vertex-transitive. Then, $w = s$ and $w' = t$ for some $s,t \in S$.

Assume that $|S| = r$. We have the two $s - t$ paths $P_{s,t}$ and $s - e - t$. Furthermore, for every $s' \in S \setminus \{s,t\}$, we can concatenate the paths $P_{s,s'}$ and $P_{s',t}$ to get $r-2$ more $s - t$ paths. It is clear by our above remark that this yields a family of $r$ disjoint $s - t$ paths in $G$.
\end{proof}

\begin{remark}
For the finite case the theorem can be proved also in the following way.
Every finite Coxeter complex $\Delta=\Delta(W,S)$ is  a triangulation of the $(|S|-1)$-sphere
which can be realized as the boundary complex of some $|S|$-dimensional simplicial polytope $P_\Delta$. The chamber graph of $\Delta$ is 
therefore isomorphic to the graph consisting of the vertices and edges of the polytope that is dual to $P_\Delta$.
In \cite{Balinski1961}, M. Balinski showed that the graph of any $d$-dimensional 
convex polytope is $d$-connected. Thus, it follows from these known facts 
that the chamber graph of every finite Coxeter complex $\Delta$ is $|S|$-connected. 

Our proof for the connectivity of chamber graphs of Coxeter complexes, which
explicitly uses the Coxeter group structure of $\Delta$, has two advantages: the argument is valid 
also for many infinite Coxeter complexes, including all affine groups (except $\widetilde{A}_1$)
and hyperbolic groups,
and the construction reappears in a more general form for buildings 
in the next section.

\end{remark}

\begin{example}
\label{S3}

Let $W = \S_4$ be the symmetric group of all permutations on $4$ elements, generated by the set $S = \{ s_1, s_2, s_3 \}$, where $s_i = (i,i+1)$ denotes the adjacent transposition that exchanges the elements $i$ and $i+1$.
Then $(W,S)$  is a Coxeter group.
\begin{figure}[ht]
\begin{center}
\includegraphics[scale=0.5]{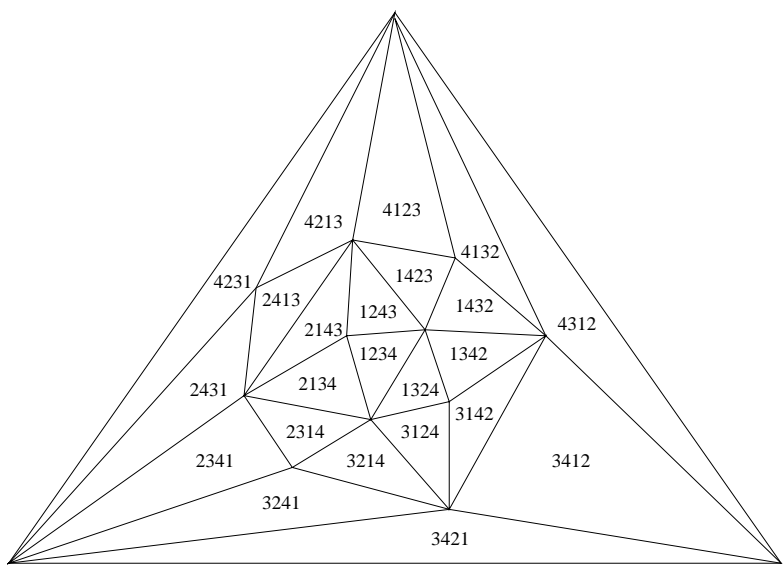}
\caption{Schlegel diagram of the Coxeter complex of $\S_4$}
\label{fig:Sn_schlegel}
\end{center}
\end{figure}

The Coxeter complex $\Delta$ of $(W,S)$ is the barycentric subdivision of the boundary of the $3$-simplex, it triangulates the $2$-sphere. Every chamber of $\Delta$ corresponds to a permutation in $\S_4$. Figure \ref{fig:Sn_schlegel} shows the Schlegel diagram of $\Delta$ where the complex is projected onto the chamber $4321$. Every chamber has been labelled by its permutation written in one-line notation.

\begin{figure}[ht]
\centering
\psfrag{s1}{$s_1$}
\psfrag{s2}{$s_2$}
\psfrag{s3}{$s_3$}
\hfill
\includegraphics[width=0.45\textwidth]{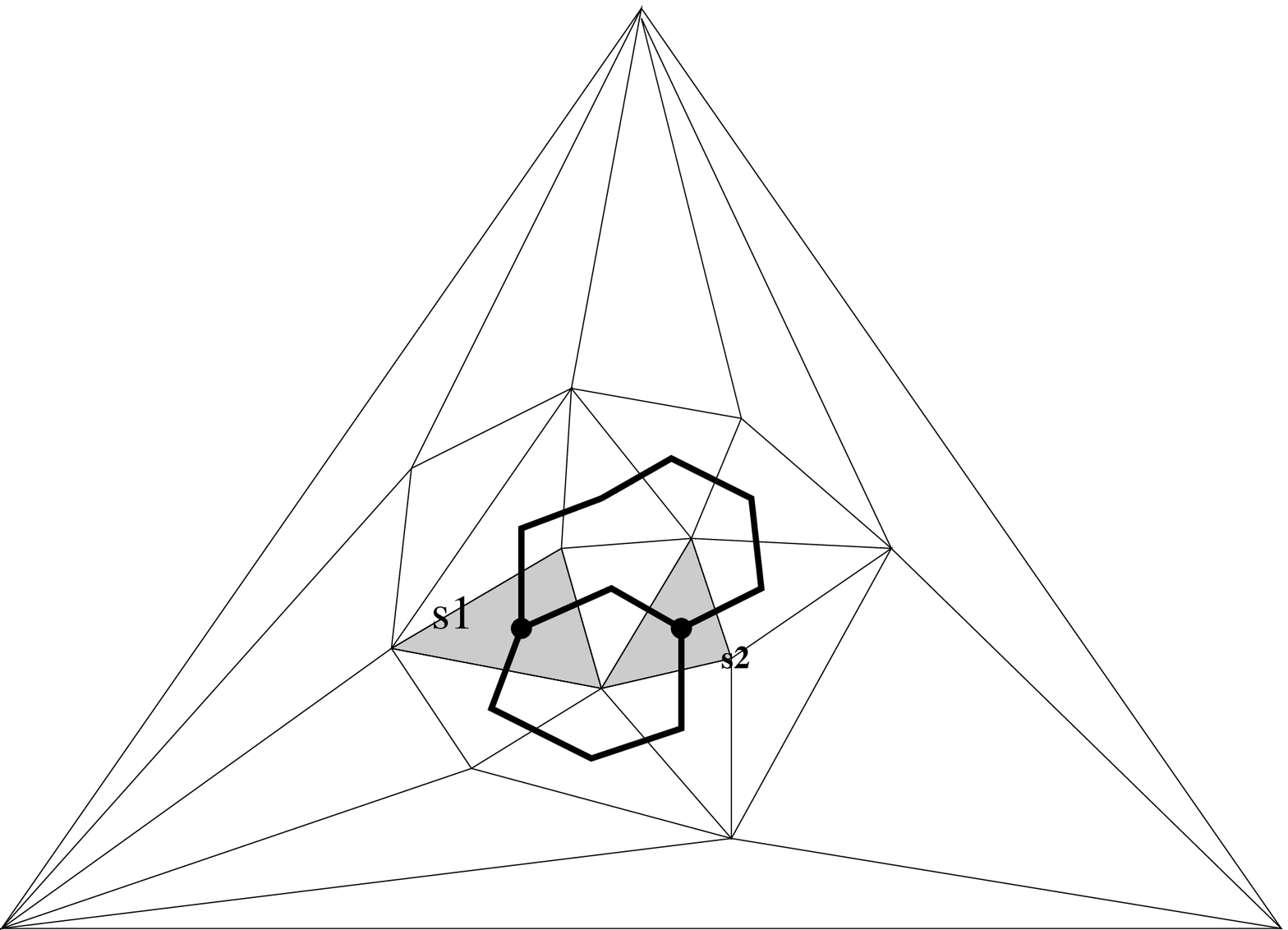}
\hfill
\includegraphics[width=0.45\textwidth]{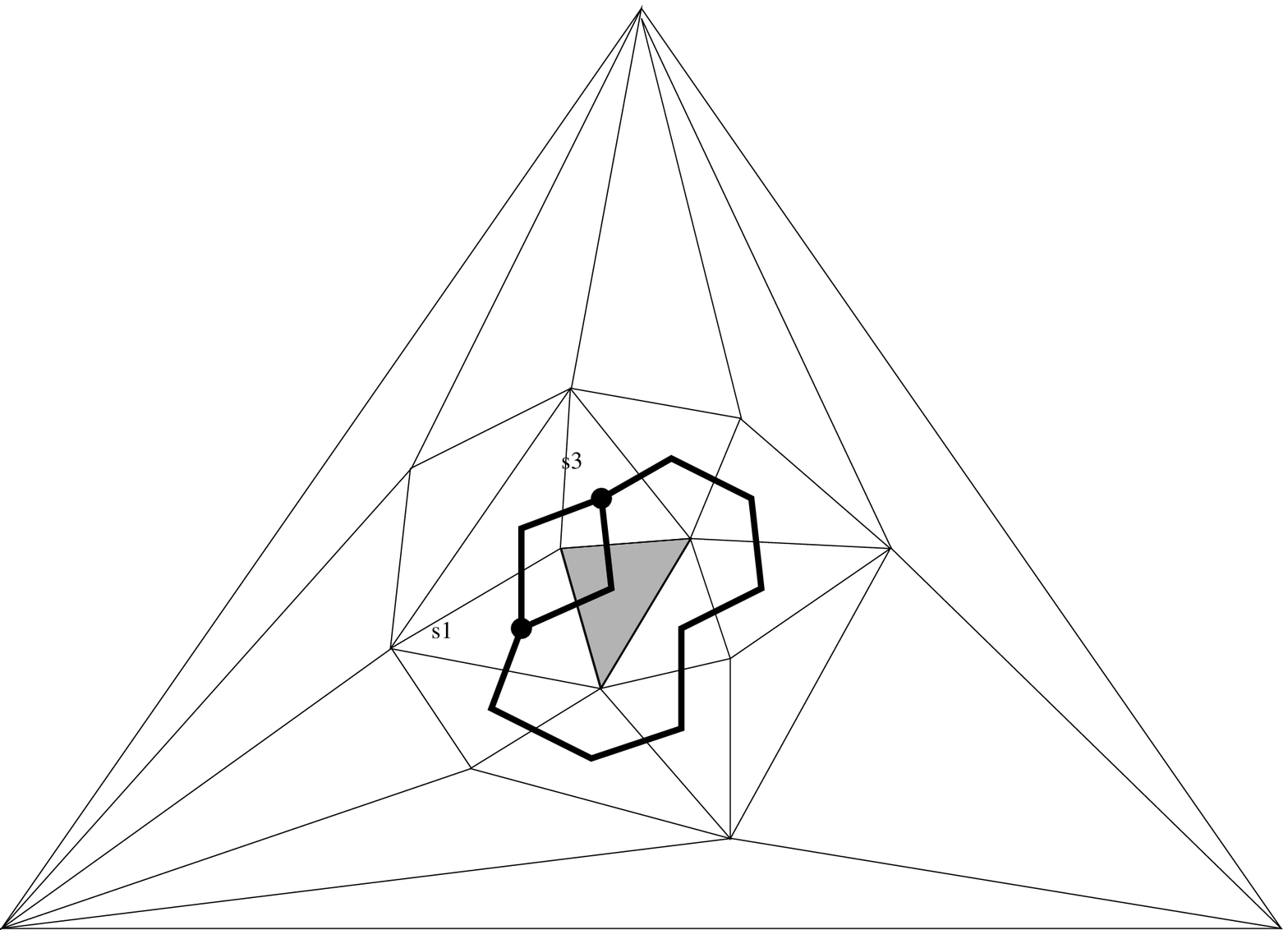}
\hfill
\hfill
\caption{Three disjoint $s_1 - s_2$ resp. $s_1 - s_3$ paths}
\label{fig:Sn_paths}
\end{figure}

Consider the chambers $s_1 = 2134$, $s_2 = 1324$ and $s_3 = 1243$,
 which are all adjacend to the chamber $e = 1234$. 
As constructed in the proof of Theorem \ref{connectivitycoxetercomplex}, the chamber graph of $\Delta$ contains the three disjoint $s_1 - s_2$ paths $s_1 - e - s_2$, $P_{s_1,s_2}$ and $P_{s_1,s_3} \circ P_{s_3, s_2}$, and also the three disjoint $s_1 - s_3$ paths $s_1 - e - s_3$, $P_{s_1,s_3}$ and $P_{s_1,s_2} \circ P_{s_2, s_3}$.
The corresponding  galleries  in the Coxeter complex are indicated  in Figure \ref{fig:Sn_paths}.

\end{example}

\begin{remark}

Note that the condition that the order of $s t$ is  finite for all $s,t \in S$, required in Theorem \ref{connectivitycoxetercomplex}, is necessary, as shown by the following example.

Let $(W,S)$ be the infinite dihedral
 Coxeter group generated by $S = \{s,t\}$ such that the order of $s t$ is infinite. Then, the chamber graph $G$ of the Coxeter complex $\Delta$ is an infinite path, and deleting any node from $G$ 
disconnects the graph. In particular, $G$ is not $2$-connected.

\end{remark}

\section{Chamber graphs of spherical buildings}

In this section we investigate the chamber graphs of buildings in order to
determine their degree of connectivity.
%and will show a statement  analogous to Theorem \ref{connectivitycoxetercomplex}. 

\begin{definition}
A {\em building} is a simplicial complex $\Delta$ which is the union of a 
certain family of subcomplexes $\Sigma$, called apartments, satisfying the 
following axioms:
\begin{itemize}
\item[{\bf (B0)}] Each apartment is a Coxeter complex.
\item[{\bf (B1)}] For any two simplices $A,B \in \Delta$, there is an apartment 
$\Sigma$ containing both of them.
\item[{\bf (B2)}] If $\Sigma$ and $\Sigma'$ are two apartments containing $A$ 
and $B$, then there is an isomorphism $\Sigma \rightarrow \Sigma'$ fixing $A$ 
and $B$ pointwise.
\end{itemize}
\end{definition}

It is not easy, without some prior familiarity with the topic,  
to imagine the elaborate  theory that emanates from these innocent-looking  axioms.
%We will consider buildings from the simplicial complex 
%point of view as it can be found in \cite[Chapter 4.1]{AbramenkoBrown2008}.
Explanations and details can be found in, for example,
% \cite[Ch. 3, Ex. 16]{BjoernerBrenti2005}. 
\cite{AbramenkoBrown2008}, 
\cite{Ronan1989},
\cite{Scharlau1995}, and
\cite{Tits1974}.

A direct consequence of axiom  (B2) with $A=B=\emptyset$
and axiom (B0) is that all apartments of a building $\Delta$ are 
isomorphic to the Coxeter complex of some particular Coxeter group $(W,S)$.
Buildings with a finite Coxeter group $W$ are called {\em spherical}.
If every wall is contained in finitely many chambers, then the building is
said to be  {\em locally finite}.
In particular, all finite buildings are both spherical and locally finite.
A building is called {\em thick} if every wall is contained in at least three chambers.

A building $\Delta$ is an $(|S[-1)$-dimensional balanced simplicial complex: its vertex set $V$ 
can be colored by the set $S$ of generators of its Coxeter group.
%(see \cite[Ch. 3]{Ronan1989}). 
We want to emphasize another interpretation of balanced-ness, already mentioned in
Section \ref{ch.graphs}: We can associate a {\em type} $s \in S$ 
to every wall of $\Delta$ such that every chamber has exactly one wall of type 
$s$ for every $s \in S$. For a chamber $C \in \Ch(\Delta)$ and some $s \in S$, 
we denote by $N(C,s)$ the set of chambers $D$ such that $D \cap C$ is a wall 
of type $s$.

\begin{lemma}[{\cite[Theorem 5.2.10]{Scharlau1995}}]
\label{Q_Property}
Let $\Delta$ be a thick spherical and locally finite building. Then, there exist 
positive integers $(q_s)_{s \in S}$ such that every wall of type $s$ is 
contained in exactly $q_s + 1$ chambers, or equivalently, such that 
$|N(C,s)| = q_s$ for every $C \in \Ch(\Delta)$ and every $s \in S$.
\end{lemma}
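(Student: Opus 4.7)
The plan is to show that the cardinality $|N(C,s)|$ is independent of the chamber $C \in \Ch(\Delta)$, depending only on $s$; this is equivalent to the stated assertion, since the number of chambers containing a wall $w$ of type $s$ equals $|N(C,s)|+1$ for any chamber $C \supset w$. Because $\Delta$ is gallery-connected (buildings are chamber-connected by axiom (B1)), it suffices to prove $|N(C,s)| = |N(C',s)|$ whenever $C$ and $C'$ are adjacent, say sharing a wall $u$ of type $t$.

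When $s=t$, the wall $u$ is the unique type-$s$ wall of both $C$ and $C'$, so $N(C,s)\cup\{C\} = N(C',s)\cup\{C'\}$ and the equality is immediate. The substantive case is $s \neq t$. For this I would pass to the rank-$2$ residue $R := R_{\{s,t\}}(C)$, defined as the connected component of $C$ in the subgraph of $G(\Delta)$ retaining only edges coloured $s$ or $t$. A standard consequence of axioms (B0)--(B2) is that $R$, regarded as a subcomplex of $\Delta$, is itself a thick spherical building of rank $2$ whose Coxeter group is the dihedral group of order $2m$ generated by $s$ and $t$, where $m$ is the order of $st$ in $W$. Since $C'$ is $t$-adjacent to $C$ we have $C'\in R$, so the problem reduces to the rank-$2$ case.

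Inside the rank-$2$ building $R$, one shows that $|N(D,s)|$ is independent of $D \in \Ch(R)$ by exploiting the \emph{gate property} (valid in spherical buildings): for each wall $w$ of $R$ and each chamber $D \in \Ch(R)$ there is a unique chamber $\mathrm{proj}_w(D) \supset w$ at minimum gallery distance from $D$. Given any two walls $w_1,w_2$ of type $s$ in $R$, axiom (B1) applied inside $R$ furnishes an apartment $\Sigma \subset R$ (a $2m$-cycle with alternating types) containing both; using (B2) one checks that the assignments $D \mapsto \mathrm{proj}_{w_2}(D)$ and $D' \mapsto \mathrm{proj}_{w_1}(D')$ are mutually inverse bijections between the set of chambers of $R$ containing $w_1$ and the analogous set for $w_2$. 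Hence the number of chambers in $R$ containing a type-$s$ wall is a single integer $q_s+1$, independent of the wall.

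I expect the main obstacle to be the two structural facts invoked above: first, that a rank-$2$ residue is itself a rank-$2$ building (a careful but standard verification of the axioms restricted to $R$); and second, that the projection maps in a spherical building are well-defined and behave well under the apartment isomorphisms of (B2). Both use essentially the spherical hypothesis, i.e., finiteness of the Coxeter group, which guarantees existence of a longest element and hence of gates. Once the rank-$2$ case is settled, the induction along galleries yields a global constant $q_s$; thickness gives $q_s \geq 2$ and local finiteness gives $q_s < \infty$, completing the proof.
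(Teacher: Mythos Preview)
The paper does not prove this lemma; it is quoted from \cite[Theorem 5.2.10]{Scharlau1995} without argument. So there is nothing to compare against, and your proposal must stand on its own.

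Your overall architecture is the standard one and is sound: reduce by gallery-connectedness to adjacent chambers, pass to the rank-$2$ residue $R=R_{\{s,t\}}(C)$ (which is indeed a thick generalized $m$-gon, $m$ the order of $st$), and argue there. The gap is in the rank-$2$ step. You assert that for \emph{any} two type-$s$ panels $w_1,w_2$ in $R$, the maps $\mathrm{proj}_{w_2}$ and $\mathrm{proj}_{w_1}$ are mutually inverse bijections between their stars. This is false in general. Take $m=3$ (a projective plane): for two distinct points $p_1,p_2$ and any line $\ell$ through $p_1$, one checks that the unique chamber on $p_2$ closest to $(p_1,\ell)$ is always $(p_2,\overline{p_1p_2})$, so $\mathrm{proj}_{p_2}$ collapses all of $\mathrm{St}(p_1)$ to a single chamber. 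The phenomenon occurs precisely when $m$ is odd, because then the opposition involution in the $2m$-gon interchanges the two vertex types, so two same-type panels are never opposite, and the gate maps between their stars need not be injective.

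The fix is to use opposition correctly. In a spherical building, if $p$ and $q$ are \emph{opposite} panels then $\mathrm{proj}_p$ and $\mathrm{proj}_q$ really are mutually inverse bijections $\mathrm{St}(p)\leftrightarrow\mathrm{St}(q)$. In the rank-$2$ residue $R$: if $m$ is even, opposite panels have the same type and your argument goes through by choosing $w_2$ opposite to $w_1$ and chaining; if $m$ is odd, opposite panels have different types, so one first gets $q_s=q_t$ via an opposite pair, and then the constancy of $|N(\,\cdot\,,s)|$ across a $t$-adjacency follows. Alternatively, one can argue directly that for the two specific $s$-panels of $C$ and $C'$ (which lie at distance $2$ in the incidence graph of $R$) there is a common opposite panel inside $R$, and compare both to it.
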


This implies that every thick spherical and locally finite building $\Delta$ has 
a $q$-regular chamber graph, where $q = q(\Delta) = \sum_{s \in S} q_s$.

Recall that a Coxeter group $(W,S)$ is said to be $2$-finite if 
$st$ is of finite order for all $s,t \in S$. Being $2$-finite implies 
existence of the paths $P_{s,t}$ in the chamber graph of the Coxeter complex 
of $(W,S)$, as was described in the previous section. We say that a building 
$\Delta$ is $2$-finite if its Coxeter group $(W,S)$ is. This 
includes all spherical buildings.

Now, let $\Delta$ be a $2$-finite building and let $\Sigma$ be 
an apartment of $\Delta$.
There is an isomorphism $\varphi : \Delta(W,S) \rightarrow \Sigma$, and because 
$W$ acts transitively on $\Ch(\Sigma)$ we can choose any chamber of $\Sigma$ to be 
the image of the chamber $e \in W \cong \Ch(\Delta(W,S))$. Assume that this chamber 
has been fixed.
Then, for any $s,t \in S$, the path $P_{s,t}$ in the chamber graph of 
$\Delta(W,S)$ induces a path in the chamber graph of $\Sigma$ and thus in the 
chamber graph of $\Delta$. We denote this path by $P^{\Sigma}_{s,t}$.

A set $\Gamma$ of chambers in a building is said to be {\em convex} if with each 
pair $C, D\in \Gamma$ every shortest path connecting $C$ and $D$ in the chamber 
graph is completely contained in $\Gamma$. Apartments are known to be convex.

\begin{lemma}
\label{path_fan}
Let $\Delta$ be a thick, spherical and locally finite building. Let $B$ be a 
chamber of $\Delta$, $C \in N(B,s)$ and $t \in S$. Then there is a family of 
$q_t$ paths $(P_D)_{D \in N(B,t)}$ in $G = G(\Delta)$, where $P_D$ is a $C-D$ path 
not containing $B$
for every $D \in N(B,t)$, and such that all paths are pairwise disjoint except at
$C$.
\end{lemma}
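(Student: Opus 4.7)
The plan is to perform all constructions inside the rank-$2$ residue $R$ of $\Delta$ of type $\{s,t\}$ containing $B$: this is a thick generalized $m$-gon with $m = m_{s,t} < \infty$ (finite since $\Delta$ is spherical) that contains $B$, $C$, and all of $N(B,t)$, and whose chamber graph is an induced subgraph of $G(\Delta)$. First I would dispose of the easy case $s = t$: here every $D \in N(B,t) = N(B,s)$ and $C$ share the type-$s$ wall of $B$, so $C$ and $D$ are themselves $s$-adjacent. Setting $P_C := (C)$ and $P_D := (C,D)$ for $D \neq C$ gives the required family, all of whose paths have empty interior and so are trivially pairwise disjoint and avoid $B$.

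For the main case $s \neq t$, the apartments of $R$ are the $2m$-cycles of its chamber graph. For each $D \in N(B,t)$, axiom (B1) (applied either in $\Delta$ and restricted to $R$, or directly in $R$) furnishes an apartment $\Sigma_D$ of $R$ containing $C$ and $D$. Because the chamber graph of $R$ has girth $2m$, the two chambers $C$ and $D$ (at distance $2$) have $B$ as their unique common neighbor when $m \geq 3$, so $B \in \Sigma_D$; when $m = 2$ the unique $4$-cycle of $R$ through $C$ and $D$ also contains $B$. In either case I would take $P_D := P^{\Sigma_D}_{s,t}$, the path of length $2m-2$ in $\Sigma_D$ induced by the dihedral Coxeter path $P_{s,t}$ in $\Delta(\langle s,t\rangle,\{s,t\})$: it runs from $C$ to $D$ along the long arc of $\Sigma_D$ and avoids $B$.

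The core technical step is to choose the apartments $\Sigma_D$ so that the paths $P_D$ are pairwise disjoint except at $C$. The interior of $P_D$ begins with some $t$-neighbor $C_D \in N(C,t)$ and ends with some $s$-neighbor of $D$; since $|N(C,t)| = q_t = |N(B,t)|$, I would fix a bijection $D \mapsto C_D$ between these two sets and choose $\Sigma_D$ to contain the length-$3$ gallery $C_D - C - B - D$. The paths then diverge at the first interior step, and two such paths meeting again at a chamber $X$ at distance less than $m$ from $C$ would yield two distinct minimal galleries from $C$ to $X$ in $R$, contradicting uniqueness of minimal galleries in a generalized $m$-gon. A symmetric observation on the $D$-side is that $N(D_1, s) \cap N(D_2, s) = \emptyset$ for $D_1 \neq D_2$ in $N(B,t)$, so the last interior chambers are automatically distinct, and a short cycle-length count using the girth $2m$ rules out collisions one step further in from each end. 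The main obstacle I foresee is the possibility of collisions at the ``middle'' of the $2m$-cycles, namely at chambers opposite to $C$ in $\Sigma_D$ where minimal galleries from $C$ are no longer unique; I expect to handle this by exploiting the remaining freedom in the choice of $\Sigma_D$ (present for $m \geq 4$) via a Hall-style matching argument, using the thickness of $R$ to guarantee enough distinct opposite-of-$C$ chambers for the family.
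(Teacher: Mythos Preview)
Your setup coincides with the paper's: dispose of $s=t$ trivially, and for $s\neq t$ fix a bijection $D\mapsto C_D$ between $N(B,t)$ and $N(C,t)$, choose an apartment $\Sigma_D$ through the minimal gallery $C_D-C-B-D$, and let $P_D=P^{\Sigma_D}_{s,t}$. The paper does exactly this (its Case~2 is your $m=2$, its Case~3 is your $m\ge 3$).

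The gap is in your disjointness argument for $m\ge 3$. Your appeal to ``uniqueness of minimal galleries from $C$ to $X$'' only works when the initial segment of $P_D$ from $C$ to $X$ \emph{is} a minimal gallery, i.e.\ on the first half of the path. On the second half the $P_D$-segment from $C$ to $F$ has length exceeding $d(C,F)$, so no contradiction arises that way; and your ``symmetric $D$-side'' remark does not help, because the two paths in question end at \emph{different} chambers $D\neq D'$. This leaves positions $m,\ldots,2m-4$ unhandled, and your proposed Hall-style matching for the chamber opposite $C$ is both speculative and unnecessary.

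The clean argument the paper has in mind (its ``analogous to Case~2'') anchors at $B$ and uses convexity of apartments rather than uniqueness of galleries. Let $F$ be an interior chamber of $P^{\Sigma_D}_{s,t}$, so $\delta(B,F)=w$ lies in the dihedral group with $2\le\ell(w)\le m$. If $\ell(tw)<\ell(w)$, then $B-D-\cdots-F$ is a minimal gallery, so $D$ lies in \emph{every} apartment containing $B$ and $F$; since $\Sigma_{D'}\cap N(B,t)=\{D'\}$, the assumption $F\in\Sigma_{D'}$ forces $D=D'$. Otherwise $\ell(sw)<\ell(w)$, hence $\delta(C,F)=sw$ satisfies $\ell(t\cdot sw)<\ell(sw)$, and the same convexity reasoning with $C$ in place of $B$ shows $C_D$ lies in every apartment containing $C$ and $F$; from $\Sigma_{D'}\cap N(C,t)=\{C_{D'}\}$ one gets $C_D=C_{D'}$ and hence $D=D'$ via your bijection. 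Every interior $F$ (including the one with $w=w_0$) falls into at least one of these two cases, so no residual matching is needed.
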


\begin{proof}
We distinguish between three cases.
% when $s = t$ or when $s$ and $t$ commute or don't.

Case 1: $s = t$: This case is easy, since $C$ is here adjacent to all 
chambers $D \in N(B,s)\setminus\{C\}$. Thus, the required paths are trivial, consisting of exactly one step
from $C$ to $D$ for every $D \in N(B,s)\setminus\{C\}$, and the path from 
$C \in N(B,s)$ to itself of length $0$. All paths are obviously disjoint except 
in $C$.

Case 2: $s \not= t$, and $s$ and $t$ commute. For every $D \in N(B,t)$, choose an 
apartment $\Sigma_D$ containing $C$ and $D$. Because $\Sigma_D$ is convex, it 
contains $B$ as well and we choose $B$ as the image of $e \in W = \Delta(W,S)$ 
under the isomorphism between $\Delta(W,S)$ and $\Sigma_D$. Then, $C$ is the 
image of $s$ and $D$ is the image of $t$ in $\Sigma_D$ and the path 
$P^{\Sigma_D}_{s,t}$ has length $2$ and goes from $C$ to $D$ in $G$,
avoiding $B$. Let $E$ be 
the chamber corresponding to $s t$ in $\Sigma_D$, that is $P^{\Sigma_D}_{s,t}$ 
is the path $C - E - D$. Then, $E - D - B$ is a shortest path from $E$ to $B$ 
and thus $D$ is contained in every apartment that contains $B$ and $E$. But $D$ 
is not contained in any apartment $\Sigma_{D'}$ for $D' \not= D$ because 
$\Sigma_{D'} \cap N(B,t) = \{ D' \}$. Thus, $E$ is not contained in any 
$\Sigma_{D'}$ for $D' \not= D$ and the path $P^{\Sigma_D}_{s,t}$ is disjoint 
from any other path $P^{\Sigma_{D'}}_{s,t}$ except in $C$.

Case 3: $s$ and $t$ do not commute. By Lemma \ref{Q_Property}, we have that $|N(B,t)| = |N(C,t)| = q_t$. Thus, we can match the elements of $N(B,t)$ with the elements of $N(C,t)$. For every matched pair $(E,D) \in N(C,t) \times N(B,t)$, we choose an apartment $\Sigma_D$ containing $E$ and $D$. Because $E-C-B-D$ is a shortest path from $E$ to $D$ and $\Sigma_D$ is convex, it contains also $C$ and $B$. We choose $B$ as the image of $e \in W$ under the isomorphism between $\Delta(W,S)$ and $\Sigma_D$. Then, $C, E$ and $D$ are the images of $s$, $s t$ and $t$, respectively. The path $P^{\Sigma_D}_{s,t}$ goes from $C$ via $E$ to $D$,
thus avoiding $B$. An argument analogous to case 2 shows that all paths $P^{\Sigma_D}_{s,t}$ are disjoint except in $C$.

In all three cases we get a path from $C$ to $D$ for every $D \in N(B,t)$, and all these paths are pairwise disjoint except in $C$.
\end{proof}

\begin{theorem}
\label{connectivitybuildings}
Let $\Delta$ be a thick, spherical and locally finite building. 
Then  its chamber graph $G = G(\Delta)$ is $q$-connected, where
$q = q(\Delta)$.
\end{theorem}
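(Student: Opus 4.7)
The plan is to apply Liu's criterion (Lemma \ref{liu}). Since $G$ is $q$-regular it certainly has more than $q$ vertices, so it suffices to exhibit $q$ pairwise disjoint $B' - B''$ paths for every pair of chambers $B',B''$ with $d_G(B',B'') = 2$. Fix such a pair, choose a common neighbor $B$, and write $B' \in N(B,s)$, $B'' \in N(B,t)$ for the unique types $s,t \in S$ (possibly $s = t$).

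To build the paths I would use Lemma \ref{path_fan} symmetrically from both endpoints. Applying it with $C = B'$, once for each $u \in S$, yields for every chamber $D \in N(B,u)$ a path from $B'$ to $D$ that avoids $B$; in particular one such path runs from $B'$ directly to $B''$ (take $u = t$ and $D = B''$). Applying the lemma with $C = B''$ does the same from $B''$'s side. The proposed family of $q$ paths is then: first, the length-$2$ gallery $B' - B - B''$; second, the $B' \to B''$ path extracted from the $B'$-fan; and third, for each of the $q-2$ chambers $D$ adjacent to $B$ with $D \notin \{B',B''\}$, the concatenation of the $B' \to D$ path from the $B'$-fan with the $D \to B''$ path from the $B''$-fan. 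Since there are exactly $\sum_{u \in S} q_u = q$ chambers adjacent to $B$, counting gives precisely $q$ proposed paths.

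The main obstacle is verifying pairwise disjointness. Lemma \ref{path_fan} is stated for a single type $t$, hence only guarantees disjointness within one $u$-fan and only away from $C$. I would need to supplement this with two further disjointness claims: (i) across the fans for different $u \in S$ emanating from the same $C$, and (ii) between the $B'$-fan and the $B''$-fan away from the prescribed meeting chambers $D$. Both should be reachable by the apartment-convexity and shortest-gallery reasoning already used inside Lemma \ref{path_fan}: in Case $2$ the interior chamber $E$ on a length-$2$ path $C - E - D$ is pinned down by the condition that $E - D - B$ is a shortest gallery, so $E$ is recovered uniquely from $D$, and an analogous rigidity is available in Case $3$. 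A careful repetition of this argument, with the apartments $\Sigma_D$ chosen coherently across the two fans, should force any hypothetical intersection to contradict either the uniqueness of shortest galleries or the convexity of apartments, yielding the required $q$ disjoint paths and, via Liu's criterion, the $q$-connectivity of $G$.
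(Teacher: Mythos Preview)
Your construction is essentially the paper's: Liu's criterion, a common neighbour $B$, and one $B'\!-\!B''$ path routed through each chamber adjacent to $B$, built by concatenating the two fans from Lemma~\ref{path_fan}. The only cosmetic difference is that you trade one of the two ``degenerate'' fan paths (the one through $B'$ itself) for the length-$2$ gallery $B'-B-B''$; the paper simply uses all $q$ neighbours of $B$ and never routes through $B$.

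The place where your proposal falls short of a proof is exactly the point you flag: cross-type and cross-fan disjointness. You suggest that the convexity and shortest-gallery arguments internal to Lemma~\ref{path_fan} can be ``repeated'' to cover these cases, but that reasoning, as written, only compares two paths targeting the \emph{same} panel type and emanating from the \emph{same} endpoint. The paper's actual mechanism is different and more global: it uses that the element $w\in W$ representing a chamber $F$ relative to $B$ is well-defined independently of the apartment (this is axiom~(B2), equivalently the well-definedness of the Weyl distance $\delta(B,F)$). Since every interior vertex of a path $P^{\Sigma}_{a,b}$ corresponds to some $w$ in the dihedral parabolic $\langle a,b\rangle$, and since two distinct rank-$2$ standard parabolics of a Coxeter group intersect only in elements of length $\le 1$, any $F$ with $d_G(B,F)\ge 2$ lies on paths with a single pair $\{a,b\}$ of types; one then finishes with Lemma~\ref{path_fan}. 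This Weyl-distance rigidity is the missing ingredient in your sketch---it is not recoverable from a local repetition of the Case~2/Case~3 shortest-gallery trick, and without it the two claims (i) and (ii) remain unproven.
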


\begin{proof}
We use Liu's criterion, Lemma \ref{liu}, to show that $G$ is $q$-connected. 
Because $G$ is $q$-regular, it has at least $q+1$ vertices. 
Let $C,D$ be two chambers of $\Delta$ with $d_G(C,D) = 2$
and let $B$ be a chamber that is adjacent to both $C$ and $D$.
We need to construct $q$ paths from $C$ to $D$ that are disjoint except at their endpoints.
%Clearly, $C - B - D$ is one such path.

Choose some $s \in S$. By Lemma \ref{path_fan}, there are families of paths $(P'_E)_{E \in N(B,s)}$ and $(P''_E)_{E \in N(B,s)}$ such that $P'_E$ goes from $C$ to $E$ and $P''_E$ goes from $D$ to $E$ for every $E \in N(B,s)$. We join these paths pairwise for every $E$ and get a family of paths $(P_E)_{E \in N(B,s)}$, where the path $P_E$ goes from $C$ via $E$ to $D$. This gives us $q_s$ paths from $C$ to $D$.
Taking those families of paths for every $s \in S$, we get in total $q = \sum_{s \in S} q_s$ paths from $C$ to $D$.

It remains to show that the constructed paths are pairwise disjoint except 
at their endpoints $C$ and $D$. Assume that some chamber $F$ is contained in two paths. Then
$F\neq B$ by construction.
Furthermore, $F$ is not adjacent to $B$, because the chambers adjacent to $B$ are by construction $C$ or $D$ or appear in exactly one path from $C$ to $D$. $F$ is the image of some $w \in W$ for every apartment $\Sigma$ containing $F$ and $B$. If $B$ is chosen to be the image of $e \in W$ in all those apartments, then $F$ is the image of some fixed $w \in W$ independently of the choice of $\Sigma$. This means that if $F$ is contained in any path $P^{\Sigma}_{s,t}$, then $w$ is contained in the dihedral subgroup of $W$ generated by $s$ and $t$. In particular, every path that contains $F$ corresponds to the same two generators $s$ and $t$. But then, $F$ is contained in two paths either from $C$ or from $D$ to two chambers in $N(B,s)$ for some $s$. Those paths are disjoint except in $C$ and $D$ by Lemma \ref{path_fan}.
\end{proof}

\begin{remark}
The theorem is not valid for thick and locally finite buildings that are not spherical. As a 
counterexample we may choose the infinite ternary tree, a rank $2$ building  
whose apartments are the embedded
copies of doubly infinite paths. Here $q(\Delta) =4$, but the chamber graph is not even $2$-connected, since 
removal of any chamber disconnects the graph.
\end{remark}

\section{Chamber graphs of geometric lattices}

For basic definitions of partially ordered 
set theory, see \cite{Stanley1997}.
Throughout the section, we will assume that $P$ is a geometric lattice.

The {\em order complex} $\Delta(P)$ of $P$ is the 
simplicial complex on vertex set $P$ having its totally ordered subsets as simplices.
The chambers of $\Delta(P)$ are the maximal chains of $P$.
We are interested in the chamber graph of $\Delta(P)$ which we will denote by $G(P)$.

Because $P$ is graded, $\Delta(P)$ is a balanced simplicial complex
whose vertices can be labelled by their rank in $P$.
Equivalently, the edges of $G(P)$ are labelled by the rank of the elements of $P$ in which the two incident chambers differ.

Furthermore, $P$ has a minimal element $\hat{0}$ and a maximal element $\hat{1}$. It is an easy observation that $\Delta(P)$ is a double cone over $\Delta(\bar{P})$ where $\bar{P} = P \setminus \{ \hat{0}, \hat{1} \}$. Thus, the chamber graphs $G(P)$ and $G(\bar{P})$ are isomorphic. When we write $G(P)$, we will sometimes mean $G(\bar{P})$ but as the only difference is if the minimal and maximal element are included in the maximal chains, this should not cause any confusion.

Recall that the width of a poset is the maximal size of an antichain. The minimal width of all intervals of length two in $P$ can be considered as {\em local width} of $P$. Define $q(P)$ to be that local width minus one. Equivalently, $q(P)$ is the largest integer such that every open interval of length two in $P$ contains at least $q(P) + 1$ elements.

If $P$ is geometric then every open interval of length two contains at least 2 elements and thus $q(P) \geq 1$. The following Lemma shows that we only need to consider intervals with lower bound $\hat{0}$ in order to compute $q(P)$.

\begin{lemma}
Let $P$ be a geometric lattice, then the following holds.
\[
    q(P) +1 = \min \ \{ \ |(\hat{0},x)| : r(x) = 2 \ \}
\]
\end{lemma}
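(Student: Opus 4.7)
My plan is to prove both directions separately. The inequality $q(P)+1 \le \min\{|(\hat{0}, x)| : r(x) = 2\}$ is immediate from the definition of $q(P)$, because intervals of the form $(\hat{0}, x)$ with $r(x) = 2$ sit inside the larger family of open length-$2$ intervals of $P$ over which $q(P)+1$ is the infimum. The real task is the reverse inequality: for each length-$2$ interval $[y,x]$ in $P$, I need to produce an $x' \in P$ of rank $2$ with $|(\hat{0}, x')| \le |(y,x)|$.

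The construction is as follows. Since $[y,x]$ inherits semimodularity and atomicity from $P$, it is itself a rank-$2$ geometric lattice, so $(y,x)$ consists of at least two atoms of this sub-lattice; I pick two of them, $z_1$ and $z_2$, and using atomicity of $P$ write $z_i = y \vee a_i$ for atoms $a_i$ of $P$ with $a_i \not\leq y$. I then set
\[
    x' := a_1 \vee a_2,
\]
which has rank $2$ in $P$ because $a_1 \neq a_2$. The map I will use is $\phi(c) := y \vee c$, sending atoms of $[\hat{0}, x']$ into $(y,x)$; the inequality will follow once I verify that $\phi$ is well-defined and injective.

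The step I expect to be the main obstacle is checking that no atom $c$ of $[\hat{0}, x']$ lies below $y$, since otherwise $\phi(c) = y$ would fall outside $(y,x)$ and the count would break. The cases $c = a_1, a_2$ are ruled out by construction; for a third atom $c$ of the rank-$2$ element $x'$, I will use $c \vee a_1 = x'$ (two distinct atoms of a rank-$2$ interval) to deduce that $c \leq y$ would force $x' \leq y \vee a_1 = z_1$, whence $z_2 = y \vee a_2 \leq z_1$, contradicting the distinctness of the atoms $z_1, z_2$ of $[y,x]$. Once this is in hand, semimodularity makes $y \vee c$ cover $y$, and $y \vee c \leq y \vee x' \leq x$ guarantees $\phi(c) \in (y,x)$.

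Injectivity of $\phi$ follows from a similar rank-$2$ trick: if $c_1 \neq c_2$ are atoms of $x'$ with $y \vee c_1 = y \vee c_2 = z$, then $x' = c_1 \vee c_2 \leq z$, so $z \geq y \vee x' = z_1 \vee z_2 = x$, contradicting that $z$ covers $y$ and hence has rank $r(y)+1 < r(x)$. Combining the two checks, $\phi$ injects $(\hat{0}, x')$ into $(y,x)$, giving $|(\hat{0}, x')| \leq |(y,x)|$; minimizing over all length-$2$ intervals $[y,x]$ yields the desired equality.
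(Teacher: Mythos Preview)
Your proof is correct and follows essentially the same approach as the paper: both select two atoms $a_1,a_2$ of $P$ with $y\vee a_1\vee a_2=x$, set $x'=a_1\vee a_2$, and show that the map $c\mapsto y\vee c$ injects $(\hat 0,x')$ into $(y,x)$. You are in fact more explicit than the paper about verifying that no atom $c\le x'$ satisfies $c\le y$ (so that $\phi$ really lands in $(y,x)$), a point the paper leaves implicit.
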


\begin{proof}
Let $(x,y)$ be an open interval of length two, that is $r(y) = r(x) + 2$. Because $P$ is geometric, we find atoms $a$ and $b$ such that $x \lhd x \vee a \lhd x \vee a \vee b = y$. We claim that $(x,y)$ contains at least as many elements as $(\hat{0},a \vee b)$. That would imply the Lemma.

For every $c \in (\hat{0}, a \vee b)$, the fact that $P$ is a lattice and semi-modular ensures 
that $x \lhd x \vee c \lhd y$. Let $c,c' \in (\hat{0},a\vee b)$ be such that $x \vee c = x \vee c'$. Then $x \vee (c \vee c') = x \vee c \lhd x \vee a \vee b = y$ which implies that $c \vee c' \lhd a \vee b$. But now, $c$, $c'$ and  $c \vee c'$ are atoms and we find that $c = c'$. This shows that the map $(\hat{0},a \vee b) \rightarrow (x,y)$ that sends an atom $c$ to $x \vee c$ is injective, and our claim follows.
\end{proof}

\begin{remark}
Consider a finite building $\Delta$ whose Coxeter group 
is the symmetric group. Equivalently, $\Delta$ is the flag complex of some finite projective geometry \cite[Theorem 6.3]{Tits1974}, that is $\Delta$ is the order complex of the modular and geometric lattice
given by all non-trivial linear subspaces of that geometry, partially ordered by inclusion.

In particular, the Coxeter complex of the symmetric group $S_n$ is the order 
complex of a geometric lattice of rank $n$, namely the Boolean lattice.
%with its maximal element removed.
\end{remark}

We are interested in the connectivity of the chamber graphs $G(P)$ of geometric 
lattices $P$. 

\begin{theorem}
\label{Lattice}
Let $P$ be a finite geometric lattic of rank $n$ and let $q = q(P)$. Then, the chamber 
graph of $P$ is $q(n-1)$-connected.
\end{theorem}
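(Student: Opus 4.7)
\textbf{The plan} is to apply Liu's criterion (Lemma~\ref{liu}). Every chamber $C=(c_1,\dots,c_{n-1})$ in $G(P)$ has at each rank $k$ at least $q$ neighbours (namely $|(c_{k-1},c_{k+1})|-1\ge q$, writing $c_0=\hat 0$, $c_n=\hat 1$), so the minimum degree is at least $q(n-1)$. It therefore suffices to exhibit $q(n-1)$ internally disjoint paths between any two chambers $C,D$ at graph distance $2$. Fix such $C,D$ and a common neighbour $B$, and let $i,j$ be the ranks at which $C$ and $D$ respectively differ from $B$. I would first observe that $i\ne j$: otherwise $C$ and $D$ would themselves differ at a single rank and hence be adjacent, contradicting $d_G(C,D)=2$.

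\textbf{Next}, I would construct the paths in $n-1$ families, one per rank. Writing $X^{k\to z}$ for the chamber obtained from $X$ by substituting $z$ at rank $k$, the rank-$i$ family consists of
\[
    P^i_x\colon\ C\to B^{i\to x}\to D^{i\to x}\to D,\qquad x\in(b_{i-1},b_{i+1})\setminus\{c_i\},
\]
which has length $3$ generically and collapses to $C\to B\to D$ at $x=b_i$. The rank-$j$ family is symmetric and contains, at the parameter value $y=d_j$, the ``diagonal'' length-$2$ path through the chamber carrying $c_i$ at rank $i$ and $d_j$ at rank $j$. For each remaining rank $k\notin\{i,j\}$ the family consists of the length-$4$ detours
\[
    P^k_z\colon\ C\to C^{k\to z}\to B^{k\to z}\to D^{k\to z}\to D,\qquad z\in(b_{k-1},b_{k+1})\setminus\{b_k\},
\]
totalling at least $q$ paths per family, hence $q(n-1)$ paths overall.

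\textbf{Disjointness} across families is built into the labelling: each interior vertex of a rank-$k$ path carries at its $k$-th coordinate a value different from $b_k$, while interior vertices of paths in any other family preserve $b_k$ at that coordinate; distinct paths within a single family use distinct substitution values and are therefore disjoint as well. I expect the main obstacle to be verifying that the substituted chambers $X^{k\to z}$ are actually valid maximal chains. This is automatic when the ranks involved are well-separated, but when $|j-i|=1$, or when $k$ is adjacent to $i$ or $j$, the open intervals $(b_{k-1},b_{k+1})$, $(c_{k-1},c_{k+1})$ and $(d_{k-1},d_{k+1})$ need no longer coincide, and some substitutions fall outside valid chambers. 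To handle these boundary cases I would invoke the semimodularity of $P$ in the form $r(x\vee y)+r(x\wedge y)\le r(x)+r(y)$: any offending detour can be rerouted through $B$ so that the rank-$k$ substitution is performed against the common ``$B$-background'' without destroying the rank-$k$ disjointness tag, and semimodularity guarantees that the number of admissible values of $z$ still reaches $q$ in every family.
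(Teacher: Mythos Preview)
Your overall strategy---Liu's criterion, organising the $q(n-1)$ paths into $n-1$ families indexed by rank, and tagging each family by the ``foreign'' entry at that rank---is exactly the paper's approach. Your generic rank-$k$ family (with $k$ far from $i$ and $j$) and your rank-$i$ family reproduce the paper's easy cases. One small slip: as written, both the rank-$i$ family (at $x=b_i$) and the rank-$j$ family (at $y=b_j$) collapse to the path $C\!-\!B\!-\!D$, so you double-count $B$; you need to exclude $b_j$ from the rank-$j$ parameter set and use $y=d_j$ (the diagonal chamber) instead, which your text gestures at but does not state cleanly.

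The real gap is the boundary analysis. You correctly flag that when $k\in\{i\pm1,j\pm1\}$ or $|i-j|=1$ the intervals $(c_{k-1},c_{k+1})$, $(b_{k-1},b_{k+1})$, $(d_{k-1},d_{k+1})$ diverge and the naive substitution $X^{k\to z}$ fails. But ``reroute through $B$ against the common $B$-background'' is not a construction: routing through $B$ itself collides with the rank-$i$ path already using $B$, and your disjointness tag (``coordinate $k$ differs from $b_k$'') evaporates once you start inserting extra steps that change coordinates $i$ or $j$. In the paper's proof these boundary situations split into roughly half a dozen genuinely different sub-cases, and in each one the detour is built explicitly from joins: e.g.\ when $k=i-1$ one picks $z\in(b_{k-1},c_i)$ and $w\in(b_{k-1},d_i)$ separately and bridges them via the rank-$i$ element $z\vee w$ (semimodularity puts $z\vee w\lessdot b_{i+1}$), giving a six-step path; when $k$ sits between adjacent $i$ and $j$ the construction needs three auxiliary elements. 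Moreover, the disjointness in these cases is no longer automatic from a single coordinate tag: one has to argue, using that $P$ is a lattice, that the joins $z_j\vee w$ (or $w_j\vee x_i$, etc.) are distinct for distinct $j$. None of this is hard once written out, but it is where essentially all the work in the proof lives, and your sketch does not supply it.
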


\begin{proof}
Again, we use Liu's criterion. Clearly, $G(P)$ has at least $q(n-1) + 1$ vertices. Let
$C = (\hat{0} \lhd x_1 \lhd x_2 \lhd \ldots \lhd x_{n-1} \lhd \hat{1})$ and 
$D = (\hat{0} \lhd y_1 \lhd y_2 \lhd \ldots \lhd y_{n-1} \lhd \hat{1})$ be
two chambers of $\Delta(P)$ at distance 2 in $G(P)$. This means
that $x_i = y_i$ for all ranks $i$ except two, say $i_1$ and $i_2$ where we assume that $i_1 < i_2$, and that the chamber adjacent to $C$ through a wall of rank $i_1$ is also adjacent to $D$ through a wall of rank $i_2$. The latter plays a role only if $i_2 = i_1 + 1$ and can be satisfied by exchanging the roles of $C$ and $D$ if necessary.

For every rank $r=1,\ldots,n-1$ we will construct $q$ paths from $C$ to $D$ whose first edge is labeled by $r$, that is paths that first change the element of rank $r$ in the maximal chain $\hat{0} \lhd x_1 \lhd x_2 \lhd \ldots \lhd x_{n-1} \lhd \hat{1}$. For that, we will, without further mentioning, make extensive use of the fact that every open interval of length two contains at least $q+1$ elements.

Case 1: Assume that $|r - i_1| \geq 2$ and $|r - i_2| \geq 2$. Choose $z_1,\ldots,z_q \in (x_{r-1},x_{r+1})$ with $z_j \not= x_r$ for all $j=1,\ldots,q$. Then, for every $j$, there is a path from $C$ to $D$ whose edges are labelled with $r$, $i_1$, $i_2$ and $r$ in that order. The path is shown in Figure \ref{fig:Galleries_Case_1}, where we depicted the chambers as maximal chains and labeled the edges between the chambers. Note that the paths constructed for different $j$ are disjoint except in their endpoints: each interior chamber in the $j$-th path contains the element $z_j$.

Case 2: Assume that $r = i_1 - 1$. Choose $z_1,\ldots,z_q \in (x_{r-1}, x_{r+1})$ with $z_j \not= x_r$ for all $j$ and $w_1,\ldots,w_q \in (x_{r-1}, y_{r+1})$ with $w_j \not= x_r$ for all $j$. Then, $z_j \vee w_j$ covers $z_j$ and $w_j$ and is covered by $x_{r+2}$ because $P$ is semi-modular. For every $j$, there is a path from $C$ to $D$ with edges labeled with $r$, $i_1$, $r$, $i_1$, $i_2$ and $r$ in that order, see Figure \ref{fig:Galleries_Case_2}. Again, those $q$ different paths are internally disjoint, because each interior chamber of the $j$-th path contains either $z_j$ or $w_j$.
%In a similar way, we can construct $q$ paths from $C$ to $D$ that are disjoint except in their endpoints if $r = %i_1 + 1 < i_2 - 1$ or $i_1 + 1 < r = i_2 - 1$ or $r = i_2 + 1$.

Case 3: Assume that $r = i_1 < i_2 - 1$. Let $z_1,\ldots,z_q \in (x_{r-1},x_{r+1})$, we can choose $z_1 = y_r$.
For $j = 1$, we get a path of length $2$ from $C$ to $D$ with edges labeled with $r$ and $i_2$.
For every $j>1$, we get a path from $C$ to $D$ with edges labeled with $r$, $i_2$, $r$ as in Figure \ref{fig:Galleries_Case_3}.
As before, all $q$ paths are internally disjoint.
%A similar construction yields $q$ paths if $r = i_2 > i_1 + 1$.

Case 4: Assume that $r = i_1 = i_2 - 1$. Let $z_1,\ldots,z_q \in (x_{r-1},x_{r+1})$, we can choose $z_1 = y_r$.
For $j = 1$, we get a path of length $2$ from $C$ to $D$ with edges labeled with $r$ and $i_2$ as in Case 3.
For every $j>1$, we get a path from $C$ to $D$ with edges labeled with $r$, $i_2$, $r$, $i_2$ as in Figure \ref{fig:Galleries_Case_4}.
Note that every interior chamber of the $j$-th path contains $z_j$ or $z_j \vee y_r$ and that $z_j \vee y_r \not= z_{j'} \vee y_r$ for $j \not= j'$ because $P$ is a lattice. This ensures that all $q$ paths are internally disjoint.

Case 5: $i_1 + 1 = r = i_2 - 1$. Choose $z_1,\ldots,z_q \in (x_{i_1},x_{i_2})$ with $z_j \not= x_r$ for all $j$, $w_1,\ldots,w_q \in (y_{i_1},y_{i_2})$ with $w_j \not= x_r$ for all $j$ and for every $j=1,\ldots,q$, choose $u_j \in (x_{i_1-1}, w_j)$ with $u_j \not= y_{i_1}$. Then for every $j$, we find a path from $C$ to $D$ as shown in Figure \ref{fig:Galleries_Case_5}. Furthermore, the elements $u_j \vee x_{i_1}$ and $u_j \vee z$ are different for different $j$ and the constructed $q$ paths are internally disjoint.

Case 6: $r = i_2 = i_1 + 1$. Choose $w_1,\ldots,w_q \in (x_{i_1-1}, y_{i_2})$ with $w_j \not= x_{i_1}$ for all $j$. For every $j$, we construct a path from $C$ to $D$ as in Figure \ref{fig:Galleries_Case_6}. Every interior chamber of the $j$-th path contains $w_j$ or $x_{i_1} \vee w_j$ and $w_j \vee x_{i_1} \not= w_{j'} \vee x_{i_1}$ for $j \not= j'$ because $P$ is a lattice. Thus, all $q$ paths are internally disjoint.

Cases 7--8--9: Assume that $r = i_1 + 1 < i_2 - 1$, or $i_1 + 1 < r = i_2 - 1$, or $r = i_2 + 1$.
Then, by a construction similar to that used for Case 2 
we can construct $q$ paths from $C$ to $D$ that are disjoint except in their endpoints

Case 10: Assume that $r = i_2 > i_1 + 1$.
Then, a construction similar to that used for Case 3 yields $q$ paths
 from $C$ to $D$ that are disjoint except in their endpoints. \medskip

We invite the reader to check that every path from $C$ to $D$ that we constructed above and that starts with an edge labeled by $r$ does not change any elements in the maximal chains except at the ranks $i_1, i_2$ and $r$. Furthermore, every maximal chain different from $C$ and $D$ contains some element at rank $r$, $i_1$ or $i_2$ that is not contained in any maximal chain of any other path. This shows that we have
constructed a family of $q(n-1)$ paths from $C$ to $D$ that are pairwise disjoint except in their endpoints. Liu's criterion implies that $G(P)$ is $q(n-1)$-connected.
\end{proof}

\begin{example}
Recall that the Coxeter complex of the symmetric group $S_4$ is isomorphic to 
the order complex of the boolean lattice $B_4$ consisting of all subsets of $\{1,2,3, 4\}$. 
We have shown in Theorem \ref{connectivitycoxetercomplex} that the chamber 
graph of that complex is $3$-connected.
This result also follows from Theorem \ref{Lattice} because $q(B_4) = 1$. In fact, the construction in the proof of Theorem \ref{Lattice} yields exaclty the same family of internally disjoint paths as in Example \ref{S3}.
\end{example}

\begin{figure}[htb]
\begin{center}
\psfrag{C}{$C$}
\psfrag{D}{$D$}
\psfrag{xr}{$x_r$}
\psfrag{xi1}{$x_{i_1}$}
\psfrag{xi2}{$x_{i_2}$}
\psfrag{yi1}{$y_{i_1}$}
\psfrag{yi2}{$y_{i_2}$}
\psfrag{r}{$r$}
\psfrag{i1}{$i_1$}
\psfrag{i2}{$i_2$}
\psfrag{z}{$z$}
\includegraphics[scale=0.45]{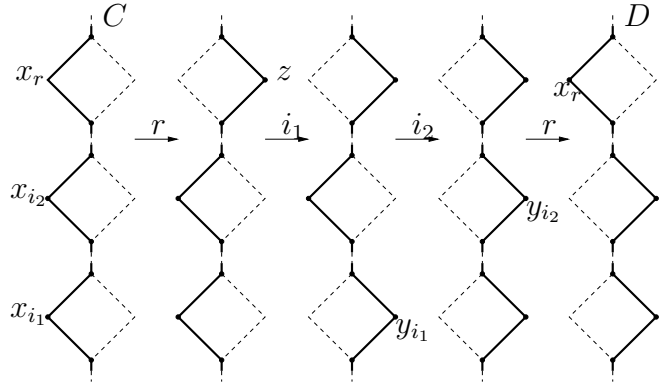}
\caption{Path from $C$ to $D$, Case 1}
\label{fig:Galleries_Case_1}
\end{center}
\end{figure}

\begin{figure}[htb]
\begin{center}
\psfrag{C}{$C$}
\psfrag{D}{$D$}
\psfrag{xr}{$x_r$}
\psfrag{xi1}{$x_{i_1}$}
\psfrag{xi2}{$x_{i_2}$}
\psfrag{yi1}{$y_{i_1}$}
\psfrag{yi2}{$y_{i_2}$}
\psfrag{r}{$r$}
\psfrag{i1}{$i_1$}
\psfrag{i2}{$i_2$}
\psfrag{z}{$z$}
\psfrag{w}{$w$}
\psfrag{zvw}{$z\! \vee\! w$}
\includegraphics[scale=0.45]{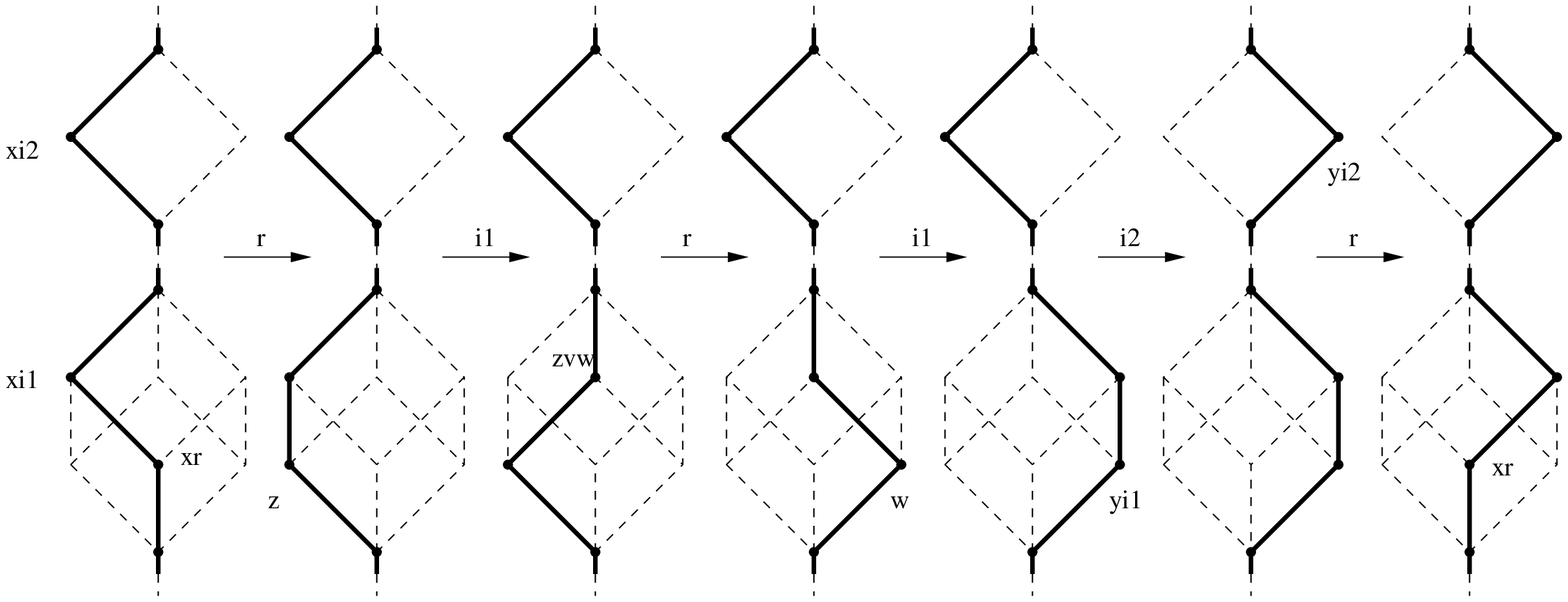}
\caption{Path from $C$ to $D$, Case 2}
\label{fig:Galleries_Case_2}
\end{center}
\end{figure}

\begin{figure}[htb]
\begin{center}
\psfrag{C}{$C$}
\psfrag{D}{$D$}
\psfrag{xi1}{$x_{i_1}$}
\psfrag{xi2}{$x_{i_2}$}
\psfrag{yi1}{$y_{i_1}$}
\psfrag{yi2}{$y_{i_2}$}
\psfrag{i1}{$i_1$}
\psfrag{i2}{$i_2$}
\includegraphics[scale=0.45]{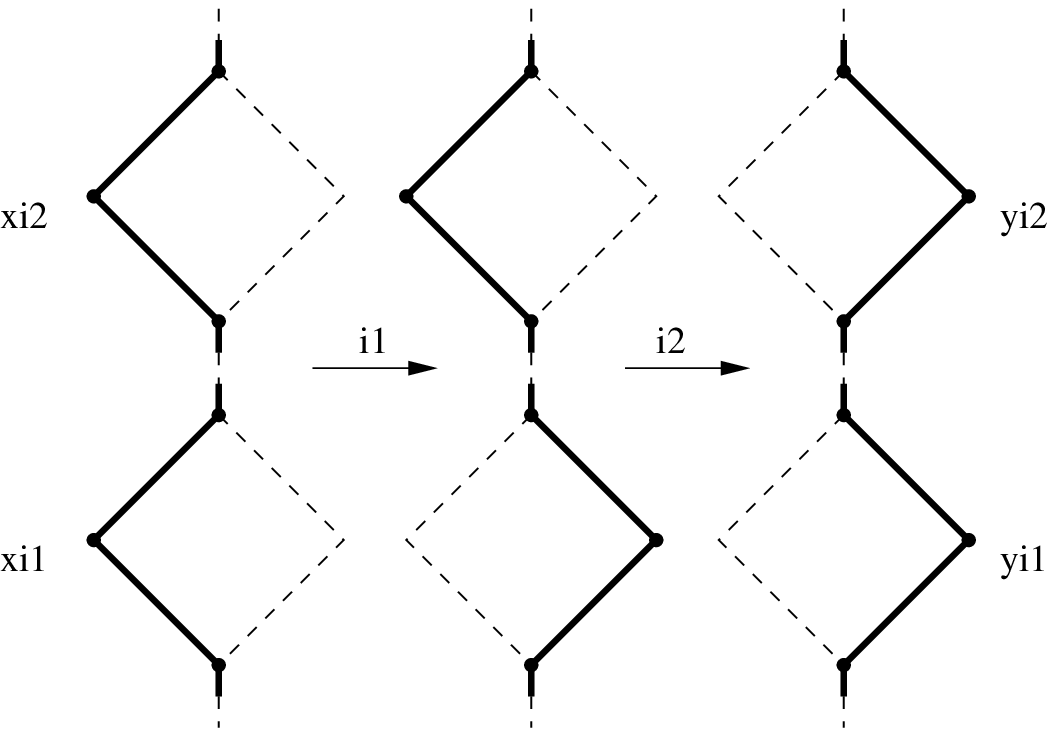}
\caption{Path from $C$ to $D$, Case 3}
\label{fig:Galleries_Case_3}
\end{center}
\end{figure}

\begin{figure}[htb]
\begin{center}
\psfrag{C}{$C$}
\psfrag{D}{$D$}
\psfrag{xr}{$x_r$}
\psfrag{xi1}{$x_{i_1}$}
\psfrag{xi2}{$x_{i_2}$}
\psfrag{yi1}{$y_{i_1}$}
\psfrag{yi2}{$y_{i_2}$}
\psfrag{r}{$r$}
\psfrag{i1}{$i_1$}
\psfrag{i2}{$i_2$}
\psfrag{z}{$z$}
\psfrag{w}{$w$}
\psfrag{u}{$u$}
\psfrag{zvu}{$z\! \vee\! u$}
\psfrag{xi1vu}{$x_{i_1}\!\! \vee\! u$}
\includegraphics[scale=0.45]{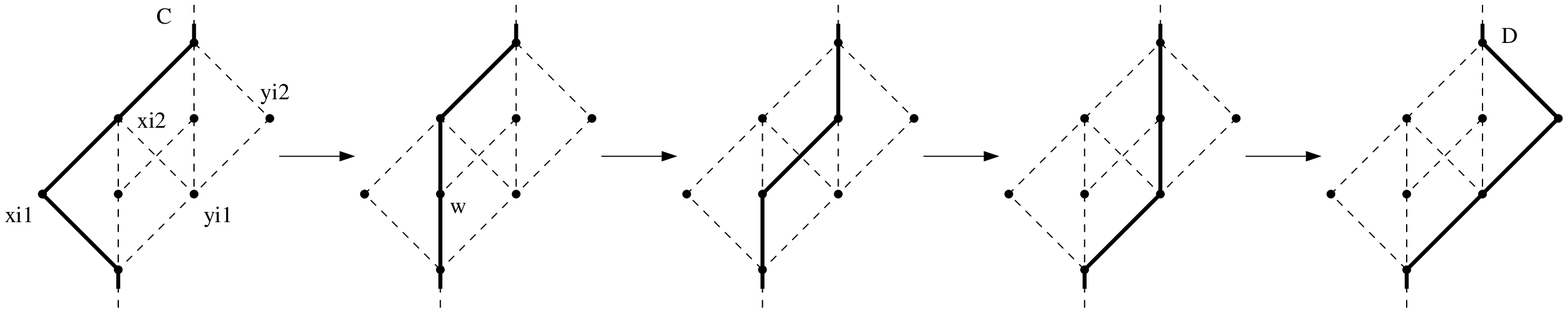}
\caption{Path from $C$ to $D$, Case 4}
\label{fig:Galleries_Case_4}
\end{center}
\end{figure}

\begin{figure}[htb]
\begin{center}
\psfrag{C}{$C$}
\psfrag{D}{$D$}
\psfrag{xr}{$x_r$}
\psfrag{xi1}{$x_{i_1}$}
\psfrag{xi2}{$x_{i_2}$}
\psfrag{yi1}{$y_{i_1}$}
\psfrag{yi2}{$y_{i_2}$}
\psfrag{r}{$r$}
\psfrag{i1}{$i_1$}
\psfrag{i2}{$i_2$}
\psfrag{z}{$z$}
\psfrag{w}{$w$}
\psfrag{u}{$u$}
\psfrag{zvu}{$z\! \vee\! u$}
\psfrag{xi1vu}{$x_{i_1}\!\! \vee\! u$}
\includegraphics[scale=0.45]{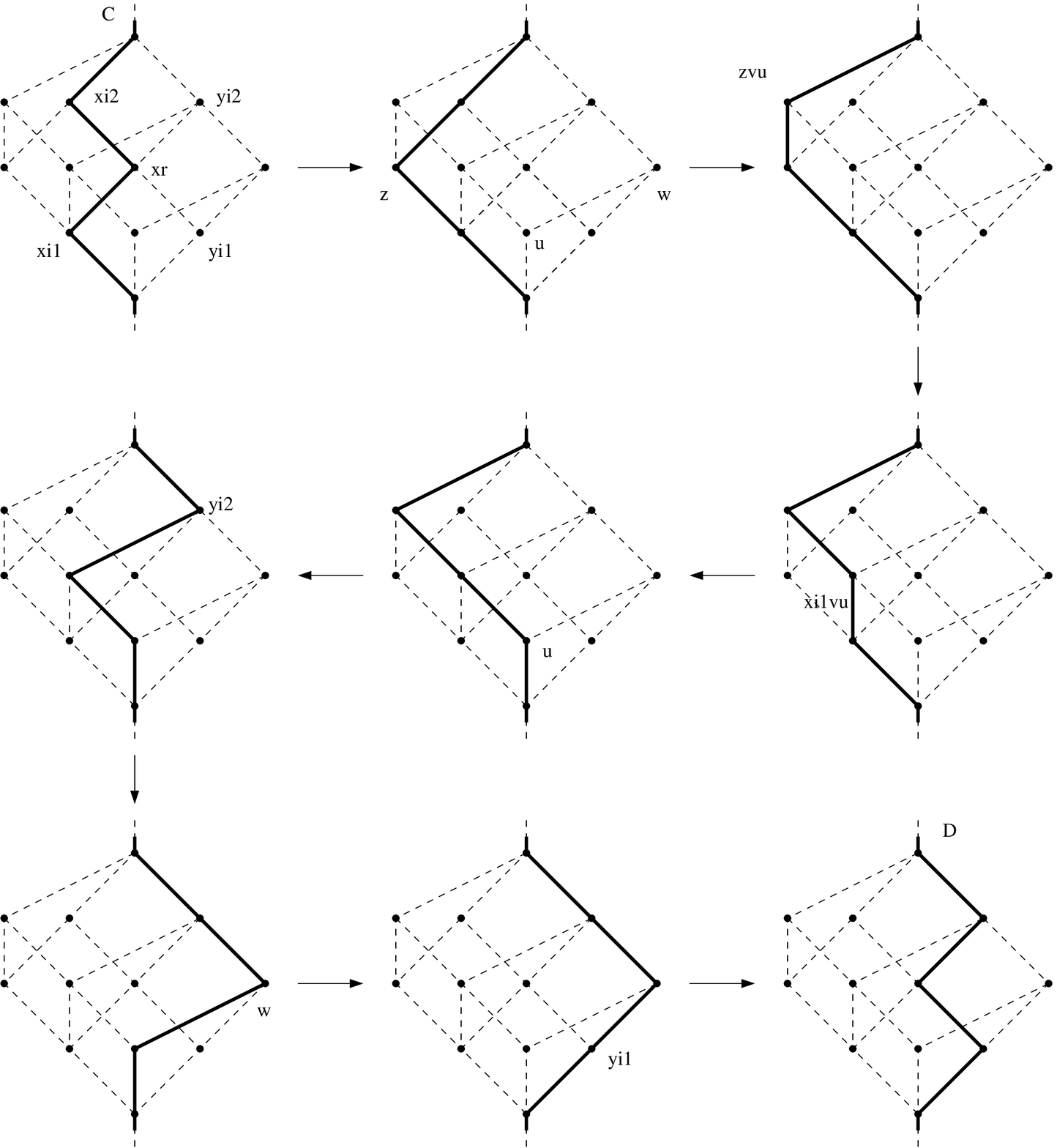}
\caption{Path from $C$ to $D$, Case 5}
\label{fig:Galleries_Case_5}
\end{center}
\end{figure}

\begin{figure}[htb]
\begin{center}
\psfrag{C}{$C$}
\psfrag{D}{$D$}
\psfrag{xi1}{$x_{i_1}$}
\psfrag{xi2}{$x_{i_2}$}
\psfrag{yi1}{$y_{i_1}$}
\psfrag{yi2}{$y_{i_2}$}
\psfrag{i1}{$i_1$}
\psfrag{i2}{$i_2$}
\psfrag{w}{$w$}
\psfrag{xi1vw}{$x_{i_1}\!\! \vee\! w$}
\includegraphics[scale=0.45]{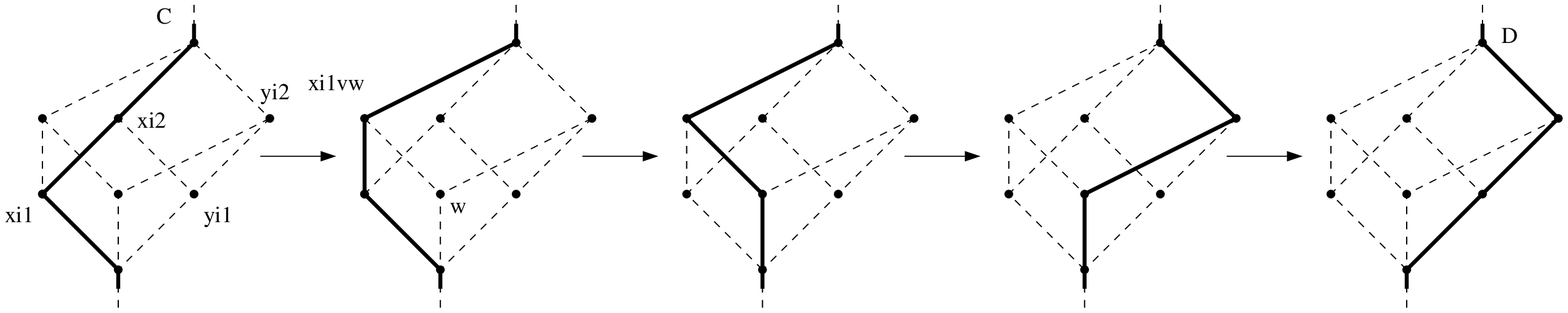}
\caption{Path from $C$ to $D$, Case 6}
\label{fig:Galleries_Case_6}
\end{center}
\end{figure}

\end{document}